\def\R{\mathbb{R}}
\def\vv<#1>{\langle#1\rangle}
\def\ol{\overline}
\def\XXint#1#2{\setbox0=\hbox{$#1{#2}{\int}$}{#2}\kern-.5\wd0 }
\def\XXint#1#2#3{{\setbox0=\hbox{$#1{#2#3}{\int}$}
     \vcenter{\hbox{$#2#3$}}\kern-.5\wd0}}
\def\vv<#1>{{\left\langle#1\right\rangle}}
\def\Int{{\rm Int}}
\newtheorem{thm}{Theorem}[section]
\newtheorem{lem}{Lemma}[section]
\theoremstyle{definition}
\theoremstyle{remark}
\numberwithin{equation}{section}
\begin{document}
\title{Smooth compositions with a nonsmooth inner function}

\author{Yongjie Shi}
\address{Department of Mathematics, Shantou University, Shantou, Guangdong, 515063, China}
\email{yjshi@stu.edu.cn}
\author{Chengjie Yu$^1$}
\address{Department of Mathematics, Shantou University, Shantou, Guangdong, 515063, China}
\email{cjyu@stu.edu.cn}
\thanks{$^1$Research partially supported by the Yangfan project from Guangdong Province and NSFC 11571215.}
\renewcommand{\subjclassname}{%
  \textup{2010} Mathematics Subject Classification}
\subjclass[2010]{Primary 53E52; Secondary 54C10}
\date{}
\keywords{Baire space, Baire's category theorem, periodic function}
\begin{abstract}
In this paper, we present an interesting application of Baire's category theorem.
\end{abstract}
\maketitle\markboth{Shi \& Yu}{Smooth compositions with a nonsmooth inner function}
\section{Introduction}
In this paper, we prove the following interesting result:
\begin{thm}\label{thm-main}
Let $n$ be a nonnegative integer or $\infty$, $p:\R\to\R$ be a given function with $p\not\in C^n(\R,\R)$ and
\begin{equation}
\mathcal{A}_p=\{f\in C^n(\R,\R)\ |\ f(p(\cdot)+c)\in C^n(\R,\R) \mbox{ for any }c\in\R\}.
\end{equation}
Then, either $\mathcal A_p=\R$ or $\mathcal A_p=C_d^n(\R,\R)$ for some nonzero constant $d$. Here $C_d^n(\R,\R)$ means the collection of all functions in $C^n(\R,\R)$ with periodicity $d$.
\end{thm}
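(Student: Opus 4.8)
The plan is to show that $\mathcal{A}_p$ is always a translation-invariant unital subalgebra of $C^n(\R,\R)$ containing the constants, and then to prove that the moment it contains one non-constant function it must coincide with $C_d^n(\R,\R)$ for a suitable $d\neq 0$; the dichotomy then follows. So first I would record, straight from the definition and the composition rule for $C^n$ maps, that $\mathcal{A}_p$ is a linear subspace (in fact a subalgebra) of $C^n(\R,\R)$, contains every constant function, is invariant under the translations $f\mapsto f(\cdot+a)$, and is closed under post-composition $f\mapsto h\circ f$ with $h\in C^n(\R,\R)$. None of this uses any property of $p$.

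Next comes a ``solving for $p$'' observation. If $f\in\mathcal{A}_p$ is non-constant and $n\ge1$, pick $\xi_0$ with $f'(\xi_0)\neq0$; near any point $x_0$ at which $p$ is continuous, the choice $c=\xi_0-p(x_0)$ keeps $p(x)+c$ inside a neighbourhood of $\xi_0$ on which $f$ is a $C^n$-diffeomorphism, so that $p+c=f^{-1}\circ\bigl(f\circ(p+c)\bigr)$ there, whence $p$ is $C^n$ near $x_0$. Consequently, if $p$ were continuous everywhere we would get $p\in C^n(\R,\R)$, contrary to hypothesis; so $p$ is discontinuous somewhere. (For $n=0$ this is immediate from $p\notin C^0$.)

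The heart of the proof, and where Baire's theorem is used, is the claim that when $\mathcal{A}_p\neq\R$ every non-constant $f_0\in\mathcal{A}_p$ is periodic. The easy half of the mechanism is that at a discontinuity $x_0$ of $p$ admitting a \emph{finite} subsequential limit $L\neq p(x_0)$ as $x\to x_0$, continuity of $f_0\circ(p+c)$ forces $f_0(L+c)=f_0(p(x_0)+c)$ for all $c$, i.e.\ $L-p(x_0)$ is a nonzero period of $f_0$; so it suffices to exclude the possibility that every discontinuity of $p$ is ``infinite'' (no finite subsequential limit other than $p(x_0)$, hence $|p|$ unbounded near it). Here the function $\Phi(x,c)=f_0(p(x)+c)$ is separately continuous on $\R^2$ (continuous in $x$ since $f_0\in\mathcal{A}_p$, and $C^n$ in $c$ as a translate of $f_0$), so by Baire's theorem — for instance via the classical fact that a separately continuous function is jointly continuous at every $(x_0,c)$ with $x_0$ in a comeager set, or by a direct category argument on the discontinuity set of $p$ — feeding points $x_k\to x_0$ with $|p(x_k)|\to\infty$ into this joint continuity upgrades $f_0(p(x_k)+c)\to f_0(p(x_0)+c)$ to local uniformity in $c$, and a connectedness/covering argument then forces $\lim_k f_0(p(x_k)+c)$ to be independent of $c$, hence $f_0$ constant — a contradiction. (One first checks that under the ``all discontinuities infinite'' hypothesis the discontinuity set is closed, and one may need to treat its interior via the comeager set above and its nowhere-dense part via the intermediate value theorem on the intervals where $p$ is continuous.) I expect this step to be the main obstacle: making the Baire argument robust across all possible shapes of the discontinuity set of $p$. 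The upshot is that $p$ has a finite ``jump'' and that every non-constant $f\in\mathcal{A}_p$ is periodic.

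Finally, the bookkeeping with periods. If $f_0\in\mathcal{A}_p$ is non-constant with fundamental period $D_0$, rerunning the argument above shows that $p\bmod D_0\colon\R\to\R/D_0\mathbb{Z}$ is continuous at every point, and (for $n\ge1$) the ``solving for $p$'' trick applied near a point where the induced map $\widetilde f_0\colon\R/D_0\mathbb{Z}\to\R$ has nonzero derivative promotes this to $p\bmod D_0\in C^n(\R,\R/D_0\mathbb{Z})$; hence for any $D_0$-periodic $g\in C^n(\R,\R)$ one has $g\circ(p+c)=\widetilde g\circ\bigl((p\bmod D_0)+\bar c\bigr)$, a composition of $C^n$ maps, so $C^n_{D_0}(\R,\R)\subseteq\mathcal{A}_p$. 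Thus the set $P$ of fundamental periods occurring among non-constant members of $\mathcal{A}_p$ is nonempty, $C^n_\tau(\R,\R)\subseteq\mathcal{A}_p$ for each $\tau\in P$, and — adding the corresponding cosines, which therefore lie in $\mathcal{A}_p$, and using that non-constant members of $\mathcal{A}_p$ are periodic — $P$ is pairwise commensurable and closed under least common multiples; moreover $P$ is bounded above, since an unbounded $P$ would make $p\bmod D$ a $C^n$ circle map for a cofinal family of $D$ and hence force $p\in C^n(\R,\R)$. An elementary argument then furnishes a largest element $d\in P$: $C^n_d(\R,\R)\subseteq\mathcal{A}_p$ because $d\in P$, while conversely any non-constant $f\in\mathcal{A}_p$ has fundamental period $\tau_f$ with $\mathrm{lcm}(\tau_f,d)\in P$, hence $\le d$, hence $\tau_f\mid d$, so $f$ is $d$-periodic; together with the constants this gives $\mathcal{A}_p\subseteq C^n_d(\R,\R)$. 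Therefore $\mathcal{A}_p=C^n_d(\R,\R)$ with $d\neq0$, which with the remaining case $\mathcal{A}_p=\R$ is exactly the asserted dichotomy.
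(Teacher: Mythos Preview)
Your overall architecture matches the paper's: the split ``some finite subsequential limit $\neq p(x_0)$'' versus ``no such limit'' is exactly the paper's dichotomy ``graph of $p$ not closed'' versus ``graph of $p$ closed,'' and in the first case your period argument (every finite jump is a common period of $\mathcal{A}_p$) and the inverse-function-theorem upgrade of $p\bmod d$ to $C^n$ are the same as the paper's Lemma~2.2.

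The genuine gap is in the second case. Joint continuity of $\Phi(x,c)=f_0(p(x)+c)$ on a comeager-in-$x$ set only yields that the translates $T_{-p(x_k)}f_0$ converge locally uniformly to $T_{-p(x_0)}f_0$ along some sequence $p(x_k)\to\infty$. This is \emph{not} enough to force $f_0$ constant: for $f_0=\cos$ and $p(x_k)\in 2\pi\mathbb Z$ the convergence is trivial, and more generally any almost periodic $f_0$ admits such convergent sequences of translates. So the ``connectedness/covering argument'' you allude to cannot conclude independence of $c$ from this data alone; the contradiction must come from the hypothesis $p\notin C^n$, not from non-constancy of $f_0$. The paper's mechanism is different and is the missing idea: since a closed subset of $\R^2$ is $\sigma$-compact, a Baire argument applied to the \emph{graph of $p$ itself} shows $p$ is continuous on a dense open set $G$; one then rules out $\pm\infty$ at interval endpoints of $G$ (this is where your IVT remark is relevant), shows the complement $F$ is perfect, and reapplies the same Baire-on-graph lemma to $p|_F$ to force $F=\emptyset$. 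Thus $p$ is continuous on all of $\R$, and the inverse function theorem gives $p\in C^n$, a contradiction.

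A smaller point: your endgame via the set $P$ of fundamental periods is more delicate than needed and the step ``$P$ is bounded above, hence has a maximum'' is not justified (an increasing commensurable sequence closed under lcm need not attain its sup, and ``$p\bmod D\in C^n$ for arbitrarily large $D$'' does not by itself force $p\in C^n$ when $p$ is locally unbounded). The paper bypasses this: the set $L$ of \emph{common} periods of $\mathcal{A}_p$ is a closed subgroup of $\R$ (by continuity), hence $\{0\}$, $d\mathbb Z$, or $\R$; the finite-jump argument gives $L\neq\{0\}$, and $L=d\mathbb Z$ immediately yields $\mathcal{A}_p\subseteq C^n_d$ without any lcm bookkeeping.
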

This result is motivated by the work \cite{CW} of   Christensen and Wu on diffeological vector spaces. A weaker form of Theorem \ref{thm-main} is needed in \cite{CW}. The method in \cite{CW} dealing with the weaker form seems not applicable to prove Theorem \ref{thm-main}.

It is clear that $\mathcal A_p$ is a translating invariant subalgebra of $C^n(\R,\R)$. As an example, take $p=\chi_E$ with $E$ an abitrary subset of $\R$ ($E\not=\emptyset,\R$). Then, it is  clear that $\cos(2\pi x), \sin(2\pi x)\in \mathcal A_p$. In fact, it is not hard to see that $\mathcal A_p=C^n_1(\R,\R)$ in this specific example.

A key ingredient in the proof of Theorem \ref{thm-main} is the following result about the continuity of maps with $\sigma$-compact graph.
\begin{lem}\label{lem-closed-map}
Let $X$ be a Hausdorff Baire space, $Y$ be a topological space and $f:X\to Y$ be a map with $\sigma$-compact graph. Then, $f$ is continuous on a dense open subset of $X$.
\end{lem}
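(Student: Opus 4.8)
The plan is to exploit the projection $\pi\colon X\times Y\to X$ together with Baire's category theorem. Write the $\sigma$-compact graph as $\Gamma_f=\bigcup_{k=1}^{\infty}K_k$, each $K_k$ a compact subset of $X\times Y$. The crucial observation is that $\pi|_{K_k}\colon K_k\to X$ is \emph{injective} (because $K_k\subseteq\Gamma_f$ consists of points of the form $(x,f(x))$), continuous, and closed (a continuous image of a compact set inside a Hausdorff space is compact, hence closed). A continuous closed bijection is a homeomorphism, so $\pi|_{K_k}$ is a homeomorphism of $K_k$ onto the closed set $F_k:=\pi(K_k)\subseteq X$.

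Next, since $f$ is defined on all of $X$ we have $X=\bigcup_k F_k$ with each $F_k$ closed. As $X$ is a Baire space, the open set $U:=\bigcup_k\Int(F_k)$ is dense: given any nonempty open $V\subseteq X$, $V$ is itself Baire, $V=\bigcup_k(F_k\cap V)$, so some $F_k\cap V$ has nonempty interior relative to $V$, and that interior is a nonempty open subset of $X$ lying in $F_k\cap V$, hence it meets $\Int(F_k)\cap V$. Thus $U$ is dense and open.

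Finally, fix $k$ and set $U_k:=\Int(F_k)\subseteq F_k$. Because $K_k$ is part of the graph of $f$ and $\pi(K_k)=F_k\supseteq U_k$, the set $\pi^{-1}(U_k)\cap K_k$ equals $\{(x,f(x)):x\in U_k\}$, and it is open in $K_k$; under the homeomorphism $\pi|_{K_k}$ it is carried homeomorphically onto $U_k$. Its inverse is the map $x\mapsto(x,f(x))$, which is therefore continuous on $U_k$; composing with the projection $X\times Y\to Y$ shows $f|_{U_k}$ is continuous, and since $U_k$ is open in $X$, every point of $U_k$ is a point of continuity of $f$. Hence $f$ is continuous on the dense open set $U=\bigcup_k U_k$.

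The step I expect to carry the real weight is the first one: the passage from "$\pi|_{K_k}$ is a continuous surjection onto $F_k$" to "$\pi|_{K_k}$ is a homeomorphism" relies on injectivity, which is exactly where the hypothesis that the $\sigma$-compact set is a \emph{function graph} (rather than an arbitrary relation) is used; without it one only gets continuity on a dense open set after a more delicate argument, if at all. The only other point needing care is the (standard) upgrade from "some $F_k$ has interior" to "the union of the interiors is dense", which is what yields a dense open set rather than merely one point of continuity.
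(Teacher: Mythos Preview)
Your proof is correct and follows essentially the same route as the paper's: project the compact pieces $K_k$ of the graph to closed sets $F_k=\pi(K_k)$ in $X$, use Baire to see that $\bigcup_k\Int(F_k)$ is dense open, and observe that $f$ restricted to each $F_k$ is continuous. The only cosmetic difference is that you phrase the last step via the homeomorphism $\pi|_{K_k}\colon K_k\to F_k$, whereas the paper checks directly that $(f|_{F_k})^{-1}(C)=\pi_X\bigl(K_k\cap(X\times C)\bigr)$ is compact for every closed $C\subseteq Y$; both arguments rest on exactly the same facts.
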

For a Baire space, we mean a topological space satisfying Baire's category theorem. That is, a countable intersection of open dense subset is still dense.
  Typical examples of Baire spaces are complete metric spaces and locally compact Hausdorff spaces. One may compare Lemma \ref{lem-closed-map} with Blumberg's theorem \cite{Bl} : for any real function on $\R$, there is a dense subset $E\subset \R$, such that $f|_E:E\to\R$ is continuous. Blumberg's theorem was later generalized by Bradford and Goffman \cite{BG} for more general spaces (see also \cite{Ho}).

 \noindent{\bf Acknowledgement.} The authors would like to thank the referee for generously sharing ideas that significantly strengthen the results (both Theorem \ref{thm-main} and Lemma \ref{lem-closed-map}) of the previous version of this paper, and to thank their colleague, Prof. Enxin Wu, for many helpful discussions.
\section{Proof of Theorem \ref{thm-main}}
We first prove Lemma \ref{lem-closed-map}.
\begin{proof}[Proof of Lemma \ref{lem-closed-map}] Let
\begin{equation}
\Gamma=\{(x,f(x))\in X\times Y\ |\ x\in X\}
\end{equation}
be the graph of $f$. Suppose that $\Gamma=\cup_{n=1}^\infty K_n$ where $K_n$ is
a compact subset of $X\times Y$ for $n=1,2,\cdots$. Then, $A_n=\pi_X (K_n)$ is a compact subset of $X$, where $\pi_X:X\times Y\to X$ is the natural projection. Note that $f|_{A_n}:A_n\to Y$ as a map with compact graph is continuous. To see this, let $F$ be any closed subset of $Y$, then
\begin{equation}
(f|_{A_n})^{-1}(F)=\pi_{X}(K_n\cap (X\times F))
\end{equation}
is compact and hence closed.

Let $F_n=A_n\setminus {\rm Int}(A_n)$ where ${\rm Int}(A_n)$ is the  interior of $A_n$. Then, $U_n=X\setminus F_n$ is a dense open subset of $X$. So, $\cap_{n=1}^\infty U_n$ is a dense subset of $X$ by that $X$ is a Baire space. On the other hand, since $X=\cup_{n=1}^\infty A_n$,
\begin{equation}
\cap_{n=1}^\infty U_n\subset \cup_{n=1}^\infty \Int(A_n).
\end{equation}
Hence $\cup_{n=1}^\infty \Int(A_n)$ is a dense open subset of $X$. Moreover, $f$ is continuous at the points in $\cup_{n=1}^\infty \Int(A_n)$ since $f|_{A_n}$ is continuous for $n=1,2,\cdots$. This completes the proof of the Lemma.
\end{proof}
For clarity, we will separate the proof of Theorem \ref{thm-main} into two cases:(i) graph of $p$ is closed and (ii) graph of $p$ is not closed.
\begin{lem}\label{lem-closed}
Let notations be the same as in Theorem \ref{thm-main}. Suppose that the graph of $p$ is closed, then $\mathcal A_p=\R$.
\end{lem}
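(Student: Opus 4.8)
The plan is to argue by contradiction: fix $f\in\mathcal A_p$ and, assuming $f$ is not constant, derive that $p\in C^n(\R,\R)$, contradicting the hypothesis. Since $f$ is arbitrary, this shows every element of $\mathcal A_p$ is constant, i.e.\ $\mathcal A_p=\R$.

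First I would invoke Lemma~\ref{lem-closed-map}. The graph $\Gamma$ of $p$ is closed in $\R^2$, hence $\sigma$-compact (write $\Gamma=\bigcup_{m\ge1}\bigl(\Gamma\cap[-m,m]^2\bigr)$, each piece being closed and bounded, so compact), so the lemma produces a dense open set $U\subseteq\R$ on which $p$ is continuous; put $D=\R\setminus U$, a closed nowhere dense set. If $D=\emptyset$ and $n\ge1$, a local-inversion argument already finishes: as $f$ is non-constant there is $t_0$ with $f'(t_0)\ne0$, and since $\mathcal A_p$ is translation invariant, for each $s\in\R$ the function $f(\cdot+t_0-s)$ lies in $\mathcal A_p$ and is a $C^n$-diffeomorphism near $s$; writing $s=p(x_1)$ and composing $f(p(\cdot)+t_0-s)\in C^n$ with the local $C^n$-inverse of $f(\cdot+t_0-s)$ exhibits $p$ as $C^n$ near each $x_1$, so $p\in C^n(\R,\R)$. (When $n=0$, the hypothesis $p\notin C^0$ already forces $D\ne\emptyset$.) Hence I may assume $p$ has a genuine discontinuity point $x_0\in D$.

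The rest is an analysis of $p$ near $x_0$. Since $\Gamma$ is closed, $p$ cannot be bounded on the closure of any bounded neighborhood of $x_0$: such a neighborhood would make the graph of $p$ over it compact, hence $p$ continuous there, as in the proof of Lemma~\ref{lem-closed-map}. Because $D$ is nowhere dense and $\Gamma$ is closed, one checks that this unboundedness is already witnessed along $U$, so $\limsup_{x\to x_0,\,x\in U}|p(x)|=+\infty$; after possibly replacing $p$ by $-p$ (which only replaces each $f\in\mathcal A_p$ by $f(-\cdot)$) I may assume $\limsup_{x\to x_0,\,x\in U}p(x)=+\infty$. Using that $p$ is continuous on $U$, I would then run the intermediate value theorem on the component intervals of $U$ accumulating at $x_0$ to produce a point $\gamma\in D$ — either $x_0$ itself, or a nearby point of $D$ adjacent to a component of $U$ on which $p$ is unbounded — with the property that $p$, restricted to a one-sided punctured neighborhood of $\gamma$ lying in $U$, takes every sufficiently large value arbitrarily close to $\gamma$. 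Finally, continuity of $f(p(\cdot)+c)$ at $\gamma$ gives $f(p(x)+c)\to f(p(\gamma)+c)$ as $x\to\gamma$; letting $x\to\gamma$ through points at which $p(x)$ sweeps out all large reals shows that $\lim_{t\to+\infty}f(t)$ exists and equals $f(p(\gamma)+c)$ for every $c\in\R$, hence $f$ is constant (equal to $\lim_{t\to+\infty}f(t)$) — the desired contradiction.

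The main obstacle is precisely the production of $\gamma$: upgrading ``closed graph together with a discontinuity'' to ``$p$ attains all large values arbitrarily near a point'', knowing only that $p$ is continuous on a dense open set. The delicacy is that the component intervals of $U$ accumulating at $x_0$ may individually have bounded image while their union does not, and that points of $D$ need not be limits of continuity points; so one must either pin down a single component of $U$ with unbounded image whose closure contains a suitable point of $D$, or set up an iteration and argue that it terminates. The remaining ingredients — the $\sigma$-compactness of $\Gamma$, the local inversion, and the final limit-at-infinity computation — should be routine.
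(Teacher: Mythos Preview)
Your final paragraph correctly isolates the gap, and it is a genuine one: the point $\gamma$ you seek need not exist. You want a component interval of $U$ on which $p$ is unbounded toward an endpoint $\gamma\in D$; but the limit-at-infinity computation you give at the end shows precisely that \emph{if} such a component exists then every $f\in\mathcal A_p$ is constant. Contrapositively, once you assume $f$ is non-constant, $p$ extends continuously to the closure of \emph{every} component of $U$ --- this is exactly Claim~1 of the paper's proof. Hence the residual situation, in which $D\neq\emptyset$ while $p$ is bounded on each individual component of $U$ (the unboundedness near $x_0$ being produced by infinitely many components whose individual bounds drift to infinity), is not a side case but the entire content of the lemma, and the ``iteration'' you allude to has no evident reason to terminate.

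The paper fills this gap by applying Lemma~\ref{lem-closed-map} a \emph{second} time, now to $p|_F$ where $F=\R\setminus U$: since $F$ is closed in $\R$ it is a locally compact Hausdorff (hence Baire) space, and the graph of $p|_F$ is closed in $F\times\R$, hence $\sigma$-compact. Thus $p|_F$ is continuous on a relatively dense open subset of $F$. An intermediate-value argument (using that, by Claim~1, $p$ is continuous on each $\overline{(a,b)}$) then shows that any continuity point of $p|_F$ is already a continuity point of $p$; such a point would lie in $U$, contradicting $F\cap U=\emptyset$. Therefore $F=\emptyset$, $p$ is continuous on all of $\R$, and your local-inversion step finishes. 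The missing idea, in short, is to run Baire on the bad set $F$ itself rather than to hunt for a blowup along a single component of the good set $U$.
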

\begin{proof}
We will proceed by contradiction. Assume that $\mathcal A_p\neq\R$. Let $G\subset \R$ be the largest open subset of $\R$ such that $p$ is continuous on $G$. By Lemma \ref{lem-closed-map}, $G$ is dense in $\R$. Let $F=\R\setminus G$.
For clarity, we divide the proof into several claims.\\
\noindent {\bf Claim 1.}\emph{By the structure of open subsets in $\R$, $G$ is a disjoint union of countably many open intervals. Let $(a,b)$ be one of such open intervals where $a$ may be $-\infty$ and $b$ may be $+\infty$. Then, $p\in C(\ol{(a,b)})$.}\\
{\bf Proof of Claim 1.} We only show the case that $a=-\infty$ and $b$ is finite. The proofs of the other cases are similar. In this case, $\ol{(a,b)}=(-\infty,b]$.

Because the graph of $p$ is closed and $p\in C((-\infty,b)),$  the asymptotic behavior of $p$ as $x$ approaching $b^{-}$ happens in only three cases:
\begin{enumerate}
\item  $\lim_{x\to b^-}p(x)=p(b)$,
\item $\lim_{x\to b^-}p(x)=+\infty$, and
\item  $\lim_{x\to b^-}p(x)=-\infty$.
\end{enumerate}
To prove { Claim 1}, we only need to exclude (2) and (3). If (2) is true, we have
\begin{equation}
\lim_{y\to+\infty}f(y)=\lim_{x\to b^-}f(p(x)+a)=f(p(b)+a)
\end{equation}
for any $a\in \R$ and $f\in \mathcal A_p$. This means that $\mathcal A_p=\R$ which is a contradiction. Case (3) can be excluded similarly.\\
{\bf Claim 2.} \emph{$F$ has no isolated points.}\\
{\bf Proof of Claim 2.} Suppose that $x_0$ is an isolated point of $F$. Then, by {\bf Claim 1}, $p$ is continuous on a neighborhood of $x_0$. So, $x_0\in G$ which is a contradiction.\\
{\bf Claim 3.} \emph{Any continuous point of $p|_F:F\to \R$ is a continuous point of $p$.}\\
{\bf Proof of Claim 3.} Let $x_0\in F$ be a continuous point of $p|_F$. If $x_0$ is not a continuous point of $p$.  Then there is a $\epsilon_0>0$ and a sequence $\{x_1,x_2,\cdots,x_n,\cdots\}$ of points tending to $x_0$ as $n\to\infty$, such that
\begin{equation}\label{eqn-out}
|p(x_n)-p(x_0)|\geq \epsilon_0
\end{equation}
for any $n=1,2,\cdots$. Since $x_0$ is a continuous point of $p|_{F}$, $x_n\not \in F$ for $n$ large enough. Moreover, since $x_0$ is not isolated in $F$ (by Claim 2) and by Claim 1,
$x_n\in (a_n,b_n)$ where $(a_n, b_n)$ is an open interval in the disjoint open interval decomposition of $G$ for $n$ large enough. It is clear that $a_n\to x_0$ and $b_n\to x_0$ as $n\to\infty$. Furthermore, since $x_0$ is a continuous point of $p|_{F}$ and $a_n\in F$,
\begin{equation}\label{eqn-in}
|p(a_n)-p(x_0)|<\epsilon_0/2
\end{equation}
for $n$ large enough. Since $p\in C([a_n,b_n])$ (by Claim 1), and by \eqref{eqn-out} and \eqref{eqn-in}, there is a point $\tilde x_n\in [a_n,b_n]$ such that
\begin{equation}
p(\tilde x_n)=p(x_0)+\epsilon_0/2
\end{equation}
or
\begin{equation}
p(\tilde x_n)=p(x_0)-\epsilon_0/2.
\end{equation}
Then there is a sequence $\{\tilde x_1,\tilde x_2,\cdots\}$ of points  tending to $x_0$ as $n\to \infty$ (taking subsequence if necessary) such that
\begin{equation}
p(\tilde x_n)\to p(x_0)+\epsilon_0/2
\end{equation}
or
\begin{equation}
p(\tilde x_n)\to p(x_0)-\epsilon_0/2
\end{equation}
as $n\to\infty$.
This is a contradiction of that the graph of $p$ is closed
{\bf Claim 4.} \emph{$p$ is continuous on $\R$.}\\
{\bf Proof of Claim 4.} Assume that $F\neq\emptyset$. By  Lemma \ref{lem-closed-map},  $p|_F:F\to \R$ is continuous on some dense open subset of $F$. So, there is an open interval $I$ such that $p|_F$ is continuous on $I\cap F$ with $I\cap F\neq\emptyset$. By Claim 3, points in $I\cap F$ are continuous points of $p$. So $p$ is continuous on $I$ which implies that $I\subset G$ and contradicts that $I\cap F\neq\emptyset$.

We are now ready to complete the proof. Since $p\not\in C^n(\R,\R)$, we know that $n\geq 1$. Let $f\in \mathcal A_p$ be a nonconstant function. Then, $f'(y_0)\neq 0$ for some $y_0\in \R$. It is clear that
\begin{equation}
\tilde f(y)=f(y-p(x_0)+y_0)
\end{equation}
also belongs to $\mathcal A_p$ and
\begin{equation}
\tilde f'(p(x_0))=f'(y_0)\neq 0.
\end{equation}
Let $g(x)=\tilde f(p(x))$. Then $g\in C^n(\R,\R)$ because $\tilde f\in \mathcal A_p$. Since $p$ is continuous at $x_0$,
\begin{equation}
p(x)=\tilde f^{-1}(g(x))
\end{equation}
for any $x$ in some neighborhood of $x_0$. This implies that $p$ is $C^n$ in some neighborhood of $x_0$. Because $x_0$ is arbitrary, $p\in C^n(\R,\R)$ which is a contradiction.
\end{proof}
\begin{lem}\label{lem-non-closed}
Let notations be the same as in Theorem \ref{thm-main}. If the graph of $p$ is not closed, then either $\mathcal A_p=\R$ or $\mathcal A_p=C_d^n(\R,\R)$.
\end{lem}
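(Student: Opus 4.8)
\textbf{Proof proposal for Lemma \ref{lem-non-closed}.}

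The plan is to exploit the failure of the graph of $p$ to be closed in order to manufacture a functional equation forcing periodicity. First I would record what non-closedness gives us: there exist $x_0\in\R$, a real number $L\neq p(x_0)$, and a sequence $x_k\to x_0$ with $p(x_k)\to L$; set $d=L-p(x_0)\neq 0$. Now take any $f\in\mathcal A_p$. By definition $f(p(\cdot)+c)\in C^n(\R,\R)$ for every $c$, in particular it is continuous, so evaluating along $x_k\to x_0$ yields
\begin{equation}
f(L+c)=\lim_{k\to\infty}f(p(x_k)+c)=f(p(x_0)+c)
\end{equation}
for every $c\in\R$. Re-parametrizing $y=p(x_0)+c$ this says $f(y)=f(y+d)$ for all $y\in\R$; hence $f$ is $d$-periodic and $\mathcal A_p\subseteq C_d^n(\R,\R)$.

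Next I would handle the reverse inclusion and the dichotomy. The subtlety is that the above argument only shows membership in $C_d^n$ for \emph{this particular} $d$ extracted from one witness of non-closedness; there could a priori be several such witnesses $d_1,d_2,\dots$, and then $\mathcal A_p$ would be contained in the intersection of the corresponding periodic classes. If two incommensurable periods appear, a standard argument (a continuous function with two $\R$-linearly independent periods is constant, and more generally one with a dense set of periods is constant) forces $\mathcal A_p=\R$; if all witnessed values of $d$ are commensurable, they generate a single smallest positive period $d$ and $\mathcal A_p\subseteq C_d^n(\R,\R)$. So I would first argue: let $P=\{\,p(x)-p(x') : $ there exist sequences realizing both values as above$\,\}$, or more simply let $P$ be the set of all $d\in\R$ such that $f(y+d)=f(y)$ for every $f\in\mathcal A_p$ and all $y$; $P$ is a subgroup of $(\R,+)$. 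If $P$ is dense then $\mathcal A_p=\R$ (every $f\in\mathcal A_p$ is continuous with a dense group of periods, hence constant). If $P$ is not dense it is infinite cyclic, $P=d\mathbb Z$ for a unique $d>0$, and then $\mathcal A_p\subseteq C_d^n(\R,\R)$.

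For the reverse inclusion $C_d^n(\R,\R)\subseteq\mathcal A_p$ in the non-dense case: if $f\in C^n(\R,\R)$ is $d$-periodic, I would show $f(p(\cdot)+c)\in C^n(\R,\R)$ by reducing, via the functional equation, to a region where $p$ is well-behaved. The cleanest route is: the non-closedness combined with the already-established periodicity means that for the purpose of composing with $f$ we may replace $p$ by $p$ modulo $d$, and one shows $p \bmod d$ (suitably interpreted as a map into $\R/d\mathbb Z$, composed with the $d$-periodic $f$) is in fact $C^n$ wherever it needs to be — here I expect to invoke the structure already developed, namely that away from the ``jumps'' $p$ is continuous on a dense open set and the jumps are exactly by elements of $d\mathbb Z$, so that $f\circ(p+c)$ extends $C^n$ across them because $f$ doesn't see the jump. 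Concretely: on each interval of continuity of $p$, the argument at the end of the proof of Lemma \ref{lem-closed} shows $p$ is $C^n$ there (using a nonconstant $f$, which exists once $\mathcal A_p\neq\R$, equivalently $n\geq1$), hence $f\circ(p+c)$ is $C^n$ there; at a jump point the left and right limits of $p$ differ by an element of $d\mathbb Z$, so the one-sided $C^n$ extensions of $f\circ(p+c)$ agree to all orders and glue. When $\mathcal A_p$ contains only constants we are in the case $\mathcal A_p=\R$ anyway, so there is nothing to prove.

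The main obstacle I anticipate is the gluing-across-jumps step: one must argue not merely that $f\circ(p+c)$ is continuous at a jump point but that all derivatives up to order $n$ match, and this requires knowing that $p$ itself, restricted to a one-sided neighborhood of a jump point, has one-sided $C^n$ behavior with matching jump $\in d\mathbb Z$ — which in turn leans on Claim 1-type reasoning about the boundary behavior of $p$ on the intervals of $G$, now in the non-closed setting where the boundary values need not equal $p$ at the endpoint but must differ from it by a period. Making this precise — in particular ruling out that $p$ runs off to $\pm\infty$ near an endpoint, as was done via Claim 1 in the closed case, and replacing ``$\lim p = p(b)$'' by ``$\lim p \equiv p(b) \bmod d$'' — is where the real work lies.
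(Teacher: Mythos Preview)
Your forward direction---extracting a nonzero common period from the failure of the graph to be closed, then classifying the closed group of common periods as either all of $\R$ (forcing $\mathcal A_p=\R$) or $d\mathbb Z$ (forcing $\mathcal A_p\subseteq C_d^n$)---is exactly the paper's argument.

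The genuine gap is in your plan for the reverse inclusion $C_d^n(\R,\R)\subseteq\mathcal A_p$. You propose to analyze the discontinuity structure of $p$ itself: locate intervals of continuity, show $p$ is $C^n$ there via the inverse function theorem, and then glue across ``jumps'' which are by elements of $d\mathbb Z$. But none of this structure need exist. Take $p=d\cdot\chi_{\mathbb Q}$: the graph is not closed, one checks easily that $\mathcal A_p=C_d^n(\R,\R)$, yet $p$ is nowhere continuous and has no one-sided limits anywhere---there are no intervals on which to run the inverse-function argument and no isolated jump points across which to glue. The Claim~1--4 machinery from Lemma~\ref{lem-closed} genuinely depended on the closed-graph hypothesis (in particular, Lemma~\ref{lem-closed-map} needs a $\sigma$-compact graph), and it cannot be transplanted to this setting.

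The paper sidesteps the problem by working on the quotient from the outset and never analyzing $p$. It shows directly that $\pi\circ(p+c):\R\to\R/d\mathbb Z$ is continuous \emph{everywhere}: since $\R/d\mathbb Z$ is compact, any sequence $x_k\to x$ has $\pi(p(x_k))$ subconverging to some $\theta$, and testing against all $f\in\mathcal A_p$ (which are already known to be $d$-periodic) together with translation invariance of $\mathcal A_p$ forces any lift of $\theta$ to differ from $p(x)$ by an element of $d\mathbb Z$, i.e.\ $\theta=\pi(p(x))$. Once $\pi\circ(p+c)$ is continuous, the inverse-function-theorem step from the end of Lemma~\ref{lem-closed}, applied through a local section of $\pi$, upgrades it to $C^n$ when $n\geq 1$ and $\mathcal A_p\neq\R$; then $f(p+c)=\bar f\circ\pi(p+c)\in C^n$ for every $f\in C_d^n$. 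You actually mention this quotient idea in passing (``$p\bmod d$ \dots as a map into $\R/d\mathbb Z$'') before abandoning it for the hands-on gluing plan; the fix is simply to stay on the quotient.
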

\begin{proof}
Let
\begin{equation}
L=\{d\in \R\ |\ f(y+d)=f(y)\ \mbox{for any $y\in \R$ and any $f\in \mathcal A_p$}\}.
\end{equation}
It is clear that $L$ is closed subgroup of $\R$ since $\mathcal A_p\subset C(\R,\R)$. Let
\begin{equation}
\Lambda(x)=\{y\in \R\ |\ y=\lim_{n\to\infty}p(x_n)\ \mbox{for some sequence}\ x_n\to x\}.
\end{equation}
{\bf Claim 1.}\emph{For any $y\in \Lambda(x)$, $y-p(x)\in L$.}\\
{\bf Proof of Claim 1.} Let $\{x_n\}$ be a sequence of points tending to $x$ such that
\begin{equation}
p(x_n)\to y.
\end{equation}
Then, for any $f\in \mathcal A_p$,
\begin{equation}
f(p(x)+c)=\lim_{n\to\infty}f(p(x_n)+c)=f(y+c)
\end{equation}
for any $c\in \R$ because $f(p(\cdot)+c)$ is continuous. This means that $y-f(x)\in L$.

Because the graph of $p$ is not closed, there is a point $x\in \R$, such that $\Lambda(x)\varsupsetneq \{p(x)\}$. Then, by Claim 1, we know that $L\neq \{0\}$. Therefore, $L=\R$ or $L=d\mathbb Z$ for some nonzero constant $d$. For the first case, we know that $\mathcal A_p=\R$. For the second case, we have $\mathcal A_p\subset C_d^n(\R,\R)$.

Let $\pi:\R\to\R/d\mathbb Z$ be the natural projection. Then, by Claim 1, it is clear that $\pi(p+c):\R\to \R/d\mathbb Z$ is continuous. Moreover, for any $f\in C_d^n(\R,\R)$, $f$ descends to a function $\ol f:\R/d\mathbb Z\to \R$ such that
\begin{equation}
f=\ol f\circ\pi.
\end{equation}
It is clear that $\ol f\in C^n(\R/d\mathbb Z,\R)$.

When $n=0$, for any $f\in C_d^0(\R,\R)$, we know that
\begin{equation}
f(p+c)=\ol f\circ\pi(p+c)\in C^0(\R,\R).
\end{equation}
This means that $f\in \mathcal A_p$. So, we have shown that $\mathcal A_p=C_d^n(\R,\R)$ for the case $n=0$.

When $n\geq 1$, if $\mathcal A_p\neq\R$, let $f_0$ be a nonconstant function in $\mathcal A_p$. Then a similar argument as in the proof of Lemma \ref{lem-closed} (after Claim 4 in the proof of Lemma \ref{lem-closed}) using inverse function theorem will show that $\pi(p+c)\in C^n(\R/d\mathbb Z,\R)$. Then, for any $f\in C_d^n(\R,\R)$,
\begin{equation}
f=\ol f\circ\pi(p+c)\in C^n(\R,\R).
\end{equation}
This means that $f\in \mathcal A_p$. So $\mathcal A_p=C_d^n(\R,\R)$.
\end{proof}
\begin{proof}[Proof of Theorem \ref{thm-main}] The combination of Lemma \ref{lem-closed} and Lemma \ref{lem-non-closed} gives us Theorem \ref{thm-main}.
\end{proof}

\end{document}